\def\ps@pprintTitle{%
	\let\@oddhead\@empty
	\let\@evenhead\@empty
	\let\@oddfoot\@empty
	\let\@evenfoot\@oddfoot
}
\newtheorem{thm}{Theorem}[section]
\newtheorem{lem}[thm]{Lemma}
\newtheorem{rem}[thm]{Remark}
\newtheorem{defn}[thm]{Definition}
\numberwithin{equation}{section}
\newenvironment{proof}[1][\noindent \textbf{Proof: }]{#1}{ \hfill $\square$ \vspace{2mm}}
\begin{document}

	\begin{frontmatter}
		
		\title{Global ultradifferentiable   hypoellipticity on compact manifolds}

		\author[addressUFPR]
		{Fernando de \'Avila Silva\corref{correspondingauthor}}
		\cortext[correspondingauthor]{Corresponding author}
		\ead{fernando.avila@ufpr.br}
		
				\address[addressUFPR]{Department of Mathematics, Federal University of Paran\'a, Caixa Postal 19081, \\ CEP 81531-980, Curitiba, Brazil}
		
		\author[addressUFPR]{Eliakim Cleyton Machado} 
		\ead{eliakimmachado@gmail.com}

		\begin{abstract}
		We study the  global hypoellipticity problem for certain linear operators in Komatsu classes of Roumieu and Beurling type on compact manifolds. We present an approach by combining a characterization of these spaces via eigenfuction expansions, generated by an elliptic operator, and the analysis of matrix-symbols obtained by these  expansions.
		\end{abstract}
		
		\begin{keyword}
		Global hypoellipticity, Komatsu classes, compact manifolds, Fourier expansions, invariant operators
			\MSC[2020]  35H10; 46F05; 35B65
		\end{keyword}

	\end{frontmatter}

\section{Introduction \label{intro}}

We are interested in the study of global hypoellipticity in the setting of ultradifferentiable  classes for certain linear operators on a compact manifold $X$. Broadly speaking, we aim to analyze the following problem: Let $\mathscr{F}(X)$ be an ultradifferentiable class of functions on $X$, $\mathscr{U}(X)$ its dual space and  $P: \mathscr{U}(X) \to \mathscr{U}(X)$  a linear operator. If $u \in \mathscr{U}(X)$ is such that $Pu \in \mathscr{F}(X)$, what conditions guarantee that $u \in \mathscr{F}(X)$?

One of the interests in the study of global hypoellipticity is the fact that local and global cases are rather different in general. For instance, there are classes of vector fields on the torus that are globally hypoelliptic despite not being locally hypoelliptic (see \cite{GW72}). Also, the  global properties are open problems, except for some particular classes of operators, that seem impossible to be solved by a unified approach. 

Some authors, however, have obtained significant advances by considering special cases. For instance, classes of differential operators in the ultradifferentiable setting  on the torus $\mathbb{T}^n$, e.g., \cite{AD2014,Victor_2021,IGB,YG93}, and
vector fields on compact Lie groups  as  presented by A. Kirilov, W. A. A. de Moraes and M. Ruzhansky in \cite{KWR}. Still on the topic of Lie groups, we quote the article \cite{Araujo2019}, by G. Ara\'ujo, directed to the study of global properties of a class of systems acting in some functional spaces such as analytic and Gevrey. In particular, his approach is based on a notion of invariance with respect to the Laplace-Beltrami operator.

Taking into account these facts, we attack the proposed problem  by taking inspirations from the following two works: first, the approach presented by A. Dasgupta and M. Ruzhanky in \cite{DR16}  characterizing Komatsu classes on compact manifolds in terms of a Fourier analysis generated by elliptic operators; in  second, we extend the techniques used by A. Kirilov  and  W. A. A. de Moraes in \cite{KM20} for the study of  global properties in the $C^{\infty}$ sense for certain classes of  invariant operators.

The characterizations  given in \cite{DR16} allows to represent elements in ultradifferentiable classes as expansions of type
\begin{equation*}
	u = \sum_{\ell \in \mathbb{N}_0} \left\langle \widehat{u}(\ell),e_\ell(x)\right\rangle_{\mathbb{C}^{d_\ell}}, 
\end{equation*}
given by a fixed elliptic operator $E$. On the other hand, motivated by \cite{KM20} (see also \cite{Araujo2019,GW73}), we use the fact that 
a linear $E$-invariant operator $P$, satisfying certain conditions,  has a matrix-symbol   $\sigma_P(\ell)\in \mathbb{C}^{d_\ell\times d_\ell}$, $\ell \in \mathbb{N}_0$, for which
\begin{equation*}
	Pu = \sum_{\ell \in \mathbb{N}_0} \left\langle\sigma_P(\ell)\widehat{u}(\ell),e_\ell(x)\right\rangle_{\mathbb{C}^{d_\ell}}.
\end{equation*}

Therefore, a solution of equation $Pu=f$ satisfy
\begin{equation*}
	\sigma_P(\ell)\widehat{u}(\ell) = \widehat{f}(\ell), \ \forall \ell \in \mathbb{N}_0,
\end{equation*} 
and consequently its regularity  can  be completely characterized in terms of the behavior of  $\sigma_P(\ell)$, as $\ell \to \infty$, in a suitable sense.

Our work is then organized as follows: Section \ref{sec2}
contains the ultradifferentiable spaces, as introduced in \cite{DR16}, 
and its characterizations in terms of Fourier coefficients. The classes of invariant operators are presented by  Definition \ref{def-invariant}.
Section \ref{sec3} discuss the global hypoellipticity in the Roumieu case, as it shall be stated in Theorem \ref{theorem-GH-R}, necessary and sufficient conditions  are presented in terms of matrix-symbols.
Section \ref{sec4}  extends these conditions to the Beurling case in Theorem \ref{theorem-GH-B}.

\section{Preliminaries  \label{sec2}}

Let $X$ be a n-dimensional, closed, smooth manifold endowed with a positive measure $dx$. The inner product on the Hilbert space
$L^2(X) = L^2(X,dx)$ is given by
\begin{equation*}
	(f,g)_{L^2(X)} \doteq \int_{X}{f(x)\overline{g(x)}dx}.
\end{equation*}

By $\Psi^\nu(X)$ we denote the class of classical, positive, elliptic, pseudo-differential operators, of order $\nu\in \mathbb{N}$. It is well know that if $E \in \Psi^\nu(M)$, then  its spectrum  is a discrete subset of the real line and coincides with the set of all its eigenvalues. Moreover, the eigenvalues of $E$ 
form a sequence
$$
0=\lambda_0 < \lambda_1 < \ldots < \lambda_\ell \longrightarrow \infty,
$$
counting the multiplicity.

For each $\ell \in \mathbb{N}_0 \doteq \mathbb{N} \cup \{0\}$,  the corresponding eigenspace associated with $\lambda_\ell$, which is finite dimensional, is denoted by $\mathscr{H}_\ell\doteq\ker(E-\lambda_\ell I)$. In particular,  we may fix an orthornormal basis 
$\{e_{\ell,k}\}_{k=1}^{d_\ell}$ on $\mathscr{H}_\ell$ such that 
$$
L^2(X)=\widehat{\bigoplus_{\ell \in \mathbb{N}_0}}\mathscr{H}_\ell,
$$ 
where $d_\ell \doteq dim(\mathscr{H}_\ell)$. Therefore, 
for every $f\in L^2(X)$ we have 
$$
f(x)=\sum_{\ell=0}^\infty\sum_{k=1}^{d_\ell}\widehat{f}(\ell,k) e_{\ell,k}(x),
$$
with	$\widehat{f}(\ell,k) = 	(f,e_{\ell,k})_{L^2(X)}$, for  all $\ell \in \mathbb{N}_0$ and  $k = 1, \ldots, d_{\ell}.$

Given $\ell \in \mathbb{N}_0$, the vector
$$
\widehat{f}(\ell) \doteq \left(\begin{array}{c}
	\widehat{f}(\ell,1) \\ 
	\vdots \\ 
	\widehat{f}(\ell,d_\ell)
\end{array} \right)\in \mathbb{C}^{d_\ell}
$$
is said to be a \textit{Fourier coefficient} of $f \in L^{2}(X)$. Its Hilbert Schmidt norm is
$$
\|\widehat{f}(\ell)\|_\mathtt{HS}\doteq\left(\sum_{k=1}^{d_\ell}|\widehat{f}(\ell,k)|^2\right)^{1/2}
$$
and, by  Plancherel formula, we get
$$
\|f\|_{L^2(X)}^2=\sum_{\ell=0}^\infty\sum_{k=1}^{d_\ell}|\widehat{f}(\ell,k)|^2=\sum_{\ell=0}^\infty\|\widehat{f}(\ell)\|_\mathtt{HS}^2.
$$
Moreover, $f \in C^\infty(X)$ if and only if, given $N>0$ there is $C_N>0$ such that
\begin{equation}\label{smooth-charac}
	\|\widehat{f}(\ell)\|_\mathtt{HS}\leq C_N(1+\lambda_\ell)^{-N}, \ \forall \ell \in \mathbb{N}.
\end{equation}

\subsection{Ultradifferentiable classes  \label{sec2.1}}

We now  give the definitions of  ultradifferentiable classes on $X$ as  introduce in \cite{DR16}. For this, we denote by $\mathscr{M}\doteq\{M_k\}_{k\in \mathbb{N}_0}$  a sequence of real numbers satisfying the following: There are constants 	$H>0$ and $A\geq1$  such that
\begin{description}
	\item[(M.0)] $M_0=M_1=1$,
	\item[(M.1)] $M_{k+1}\leq AH^kM_k, \ \forall  k\in \mathbb{N}_0$,
	\item[(M.2)] $M_{2k}\leq AH^{2k}M_k^2, \ \forall k\in \mathbb{N}_0$,
	\item[(M.3)] $M_k^2\leq M_{k-1}M_{k+1}, \ \forall  k\in \mathbb{N}$.
\end{description}

Also, for the Roumieu (Beurling) case we assume that there are constants $L>0$ and $C>0$ (for  every $L>0$, there exists $C>0$) such that
$$
k!\leq CL^kM_k, \ \forall k\in\mathbb{N}_0.
$$

Associated to  $\mathscr{M}$ we define a function 
$\mathcal{M}: [0,\infty)\to \mathbb{R}$ as follows: 
$$
\mathcal{M}(0)\doteq0  \ \textrm{ and } \ \mathcal{M}(r)\doteq\sup_{k\in\mathbb{N}}\left[\log\left(\frac{r^{\nu k}}{M_{\nu k}}\right)\right],\quad\forall r>0.
$$

Function  $\mathcal{M}$ is non-decreasing and, in view of  the proof of Theorem 4.2. in \cite{DR16}, satisfy  the following property.
\begin{lem}\label{lemma_several_estimates}
	Given 	 $\ell \in \mathbb{N}$ and  $L>0$ we have
	$$
	\exp \left( -\frac{1}{2}\mathcal{M}(L\lambda_\ell^{1/\nu})\right) \leq
	\exp \left(-\mathcal{M}(\widetilde{L}\lambda_\ell^{1/\nu})\right), \ \text{for}\ \widetilde{L}\doteq\frac{L}{\sqrt{A}H}.
	$$		
\end{lem}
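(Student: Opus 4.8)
The plan is to pass to logarithms. Since $\exp$ is increasing, the asserted inequality is equivalent to
$$
\mathcal{M}(\widetilde{L}\lambda_\ell^{1/\nu}) \le \tfrac{1}{2}\mathcal{M}(L\lambda_\ell^{1/\nu}).
$$
Writing $r \doteq L\lambda_\ell^{1/\nu}$ and recalling that $\widetilde{L} = L/(\sqrt{A}H)$, we have $\widetilde{L}\lambda_\ell^{1/\nu} = r/(\sqrt{A}H)$, so it suffices to establish the scale-invariant statement
$$
\mathcal{M}\!\left(\frac{r}{\sqrt{A}\,H}\right) \le \tfrac{1}{2}\mathcal{M}(r), \qquad r>0,
$$
and then specialize to $r = L\lambda_\ell^{1/\nu}$. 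I would first note that the growth assumption $k!\le CL^kM_k$ forces $M_{\nu k}$ to grow fast enough for the supremum defining $\mathcal{M}$ to be finite at each fixed argument, so every quantity below is well defined.

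The heart of the argument is to bound each individual term of the supremum on the left by $\tfrac12\mathcal{M}(r)$. For $j \in \mathbb{N}$ set
$$
T_j \doteq \nu j \log\!\left(\frac{r}{\sqrt{A}\,H}\right) - \log M_{\nu j},
$$
so that $\mathcal{M}(r/(\sqrt{A}H)) = \sup_{j}T_j$. Since the supremum defining $\mathcal{M}(r)$ ranges over all of $\mathbb{N}$, in particular over the even indices $2j$, one has the lower bound
$$
\tfrac12\mathcal{M}(r) \ge \tfrac12\big[\,2\nu j\log r - \log M_{2\nu j}\,\big] = \nu j\log r - \tfrac12\log M_{2\nu j}.
$$
Next I would insert condition \textbf{(M.2)} applied with index $\nu j$, namely $M_{2\nu j}\le A H^{2\nu j}M_{\nu j}^2$; taking logarithms and substituting yields
$$
\tfrac12\mathcal{M}(r) \ge \nu j\log r - \tfrac12\log A - \nu j\log H - \log M_{\nu j}.
$$

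Expanding $\log(\sqrt{A}\,H)=\tfrac12\log A+\log H$ gives, on the other hand,
$$
T_j = \nu j\log r - \tfrac{\nu j}{2}\log A - \nu j\log H - \log M_{\nu j}.
$$
Comparing the two displays, the desired bound $T_j \le \tfrac12\mathcal{M}(r)$ reduces to $\tfrac12\log A \le \tfrac{\nu j}{2}\log A$, which holds because $A\ge 1$ forces $\log A\ge 0$ while $\nu j\ge 1$. Taking the supremum over $j\in\mathbb{N}$ then delivers the scale-invariant inequality, and the substitution $r=L\lambda_\ell^{1/\nu}$ closes the argument. The only delicate point is the constant bookkeeping: one must split the factor $\sqrt{A}H$ exactly so that the $H$-contribution coming from \textbf{(M.2)} is absorbed completely, leaving only a nonnegative multiple of $\log A$. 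Restricting the supremum to even indices is harmless precisely because we need only a \emph{lower} bound for $\mathcal{M}(r)$, and the hypothesis $\nu\ge 1$ (hence $\nu j\ge 1$) is exactly what guarantees the residual $A$-term has the correct sign.
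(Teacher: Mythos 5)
Your proof is correct, and it is essentially the argument the paper itself relies on: the paper offers no proof of this lemma, deferring instead to the proof of Theorem 4.2 in \cite{DR16}, where the same computation is carried out --- restrict the supremum defining $\mathcal{M}(r)$ to the even indices $2j$, apply \textbf{(M.2)} at index $\nu j$, and use $A\ge 1$ together with $\nu j\ge 1$ so that the residual $\tfrac12\log A$ is absorbed by the factor $\sqrt{A}$ in $\widetilde{L}$. In short, you have supplied, correctly, the self-contained verification that the paper outsources to the citation.
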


We are now in condition to introduce the ultradifferentiable classes.
\begin{defn}
	Let $E \in \Psi^\nu(X)$ be fixed. We denote by $\mathscr{E}_{\mathscr{M}}(X)$ ($\mathscr{E}_{(\mathscr{M})}(X)$) the Roumieu (Beurling) space of functions $\phi\in C^\infty(X)$ such that there exists 
	$h>0$ and $C>0$ (for all $h>0$ there exists $C_h>0$)  satisfying 
	\begin{equation*}
		\|E^k\phi\|_{L^2(X)}\leq Ch^{\nu k}M_{\nu k},\quad\forall k\in\mathbb{N}_0.
	\end{equation*}
	\begin{equation*}
		\left(\|E^k\phi\|_{L^2(X)}\leq C_hh^{\nu k}M_{\nu k},\quad\forall k\in\mathbb{N}_0.\right)
	\end{equation*}
	
\end{defn}

\begin{rem}
	It follows from Theorem 2.3 in \cite{DR16} that these spaces
	are independent of a particular choice of $E \in \Psi^\nu(X)$, justifying the notations $\mathscr{E}_{\mathscr{M}}(X)$ and $\mathscr{E}_{(\mathscr{M})}(X)$.
\end{rem}

The symbols $\mathscr{E}_{\mathscr{M}}'(X)$ and $\mathscr{E}_{(\mathscr{M})}'(X)$ denotes the   space of ultradistributions of Roumieu and  Beurling type, respectively. The  \textit{Fourier coefficients} of an ultradistribution $u$  are defined by expressions
$$
\widehat{u}(\ell) \doteq 
\left(\begin{array}{c}
	\widehat{u}(\ell,1) \\ 
	\vdots \\ 
	\widehat{u}(\ell,d_\ell)
\end{array}\right), \ 
\widehat{u}(\ell,k) \doteq u(\overline{e_{\ell,k}}), \ k=1, \ldots, d_{\ell}.
$$

The next results characterize the ultradifferentiable classes in terms of the Fourier coefficients and the eigenvalues of the operator $E$.

\begin{thm}\label{T_charac_ultrad}
	A smooth function $\phi$ on $X$ belongs to  $\mathscr{E}_{\mathscr{M}}(X)$ ($ \mathscr{E}_{(\mathscr{M})}(X)$) if and only if there are positive constants $C$ and $L$ (for every $L>0$ there is $C_{L}>0$) such that
	\begin{equation*}
		\|\widehat{\phi}(\ell)\|_{\mathtt{HS}}\leq C\exp\left[-\mathcal{M}(L\lambda_\ell^{1/\nu})\right], \ \forall\ell \in \mathbb{N}_0,
	\end{equation*}
	\begin{equation*}
		\left(\|\widehat{\phi}(\ell)\|_{\mathtt{HS}}\leq C_L\exp\left[-\mathcal{M}(L\lambda_\ell^{1/\nu})\right], \ \forall \ell \in \mathbb{N}_0.\right)	
	\end{equation*}
	
\end{thm}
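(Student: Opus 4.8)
The plan is to pass between the two sides of the equivalence through the eigenfunction expansion, using the single algebraic fact that $E^k e_{\ell,j}=\lambda_\ell^k e_{\ell,j}$. Expanding $\phi=\sum_{\ell,j}\widehat\phi(\ell,j)e_{\ell,j}$ and applying $E^k$ termwise gives $\widehat{E^k\phi}(\ell)=\lambda_\ell^k\widehat\phi(\ell)$, so Plancherel yields $\|E^k\phi\|_{L^2(X)}^2=\sum_{\ell}\lambda_\ell^{2k}\|\widehat\phi(\ell)\|_{\mathtt{HS}}^2$, while for each fixed $\ell$ the single-term inequality $\lambda_\ell^k\|\widehat\phi(\ell)\|_{\mathtt{HS}}\le\|E^k\phi\|_{L^2(X)}$ holds. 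The bridge to $\mathcal{M}$ is its defining infimum form: since $\mathcal{M}(r)=\sup_{k\ge1}\log(r^{\nu k}/M_{\nu k})$, one has $r^{\nu k}\le M_{\nu k}\exp(\mathcal{M}(r))$ for every $k$, equivalently $\exp(-\mathcal{M}(r))=\inf_{k\ge1}M_{\nu k}r^{-\nu k}$. These two dictionaries, Plancherel on one side and the inf-representation of $\mathcal{M}$ on the other, are what convert growth of $\|E^k\phi\|_{L^2(X)}$ into decay of $\|\widehat\phi(\ell)\|_{\mathtt{HS}}$ and back.

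For the direct implication in the Roumieu case, I would feed the hypothesis $\|E^k\phi\|_{L^2(X)}\le Ch^{\nu k}M_{\nu k}$ into the single-term bound to get $\|\widehat\phi(\ell)\|_{\mathtt{HS}}\le C\,M_{\nu k}(h/\lambda_\ell^{1/\nu})^{\nu k}$ for every $k$; taking the infimum over $k$ and invoking the inf-representation gives $\|\widehat\phi(\ell)\|_{\mathtt{HS}}\le C\exp(-\mathcal{M}(\lambda_\ell^{1/\nu}/h))$, i.e. the claimed estimate with $L=1/h$. The Beurling version is identical up to reading the quantifiers: ``for all $h$ there is $C_h$'' produces ``for all $L$ there is $C_L$'' under $L=1/h$.

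For the converse I start from $\|E^k\phi\|_{L^2(X)}^2\le C^2\sum_\ell\lambda_\ell^{2k}\exp(-2\mathcal{M}(L\lambda_\ell^{1/\nu}))$ and split the squared exponential as $\exp(-2\mathcal{M})=\exp(-\mathcal{M})\cdot\exp(-\mathcal{M})$. One factor is spent absorbing the polynomial weight: using $(L\lambda_\ell^{1/\nu})^{2\nu k}\le M_{2\nu k}\exp(\mathcal{M}(L\lambda_\ell^{1/\nu}))$ gives $\lambda_\ell^{2k}\exp(-\mathcal{M}(L\lambda_\ell^{1/\nu}))\le L^{-2\nu k}M_{2\nu k}$, and then $\mathbf{(M.2)}$ replaces $M_{2\nu k}$ by $AH^{2\nu k}M_{\nu k}^2$, matching the squared target $M_{\nu k}^2$ and producing the base $h=H/L$. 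The remaining factor leaves the series $S_L:=\sum_\ell\exp(-\mathcal{M}(L\lambda_\ell^{1/\nu}))$, where Lemma \ref{lemma_several_estimates} is convenient to trade the exponent at the cost of replacing $L$ by $\widetilde L$. Assembling, $\|E^k\phi\|_{L^2(X)}\le C\sqrt{A\,S_L}\,(H/L)^{\nu k}M_{\nu k}$, which is the Roumieu estimate; the Beurling case follows by matching quantifiers through $L=H/h$, noting that letting $L$ range over $(0,\infty)$ covers all $h>0$.

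The step I expect to be the main obstacle is the finiteness of $S_L$. I would establish it from the super-polynomial decay of $\exp(-\mathcal{M})$: the inf-representation gives $\exp(-\mathcal{M}(L\lambda_\ell^{1/\nu}))\le M_{\nu k_0}L^{-\nu k_0}\lambda_\ell^{-k_0}$ for every fixed $k_0$, so convergence reduces to $\sum_\ell(1+\lambda_\ell)^{-k_0}<\infty$ for $k_0$ large, which holds by the Weyl growth of the eigenvalues of the elliptic operator $E$ (the same fact underlying \eqref{smooth-charac}). Keeping this bound uniform in the relevant parameters, and checking that in the Beurling case every constant depends only on the prescribed $L$ (resp. $h$) and not on $k$, is the delicate bookkeeping beneath the argument.
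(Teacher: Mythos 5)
The paper itself contains no proof of Theorem \ref{T_charac_ultrad}: it is recalled from \cite{DR16}, where the characterization is established, so there is no internal argument to compare yours against. Your reconstruction is correct and is essentially the argument of the cited source: both directions pass through Plancherel, the single-term bound $\lambda_\ell^k\|\widehat{\phi}(\ell)\|_{\mathtt{HS}}\le\|E^k\phi\|_{L^2(X)}$, the representation $\exp\left(-\mathcal{M}(r)\right)=\inf_{k\ge1}M_{\nu k}r^{-\nu k}$, condition (M.2) to pass from $M_{2\nu k}$ back to $M_{\nu k}^2$ (producing the base $H/L$), and Weyl-type eigenvalue growth to guarantee the convergence of $S_L=\sum_\ell\exp\left(-\mathcal{M}(L\lambda_\ell^{1/\nu})\right)$; your quantifier bookkeeping for the Roumieu/Beurling cases ($L=1/h$, resp. $L=H/h$) is also right. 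Two minor remarks. First, the identity $\widehat{E^k\phi}(\ell)=\lambda_\ell^k\widehat{\phi}(\ell)$ rests on the symmetry of the positive operator $E$, i.e.\ $(E^k\phi,e_{\ell,j})_{L^2(X)}=(\phi,E^k e_{\ell,j})_{L^2(X)}$, which is implicit in the paper's standing assumption that the eigenfunctions of $E$ form an orthonormal basis; it is cleaner to state this than to speak of applying $E^k$ termwise. Second, Lemma \ref{lemma_several_estimates} is not actually needed where you invoke it: the splitting $\exp\left(-2\mathcal{M}\right)=\exp\left(-\mathcal{M}\right)\exp\left(-\mathcal{M}\right)$ is an exact factorization, so one factor absorbs the weight $\lambda_\ell^{2k}$ and the other yields $S_L$ directly; that lemma is only required in the hypoellipticity theorems of Sections \ref{sec3} and \ref{sec4}, where a single factor $\exp\left(-\mathcal{M}\right)$ must be split into two at the cost of shrinking $L$.
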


\begin{thm}\label{T_charac_ultradistribution}
	A linear functional $u: \mathscr{E}_{\mathscr{M}}(X) \to \mathbb{C}$ ($u: \mathscr{E}_{(\mathscr{M})}(X) \to \mathbb{C}$) belongs to 
	$\mathscr{E}_{\mathscr{M}}'(X)$ 
	($ \mathscr{E}_{(\mathscr{M})}'(X)$) if and only if for every $L >0$ there exists $K_L>0$ (there exists $K>0$ and $L>0$) such that 
	\begin{equation*}
		\|\widehat{u}(\ell)\|_\mathtt{HS} \leq K_L\exp\left[\mathcal{M}(L\lambda_\ell^{1/\nu})\right], \ \forall \ell \in \mathbb{N}_0.
	\end{equation*}
	\begin{equation*}
		\left(\|\widehat{u}(\ell)\|_\mathtt{HS} \leq K\exp\left[\mathcal{M}(L\lambda_\ell^{1/\nu})\right], \ \forall \ell \in \mathbb{N}_0.\right)
	\end{equation*}

\end{thm}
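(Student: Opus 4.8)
The plan is to prove both assertions as a duality between the coefficient description of the test spaces in Theorem~\ref{T_charac_ultrad} and the defining pairing of $u$, so that the growth estimate for $\widehat u$ becomes exactly dual to the decay estimate for test functions. The convention $\widehat u(\ell,k)=u(\overline{e_{\ell,k}})$ pairs $u$ against the orthonormal system $\{\overline{e_{\ell,k}}\}_k$, which is an eigenbasis of the conjugate operator $\overline E\colon h\mapsto\overline{E\overline h}$; this $\overline E$ is again a classical, positive, elliptic operator of order $\nu$ with the same eigenvalues $\lambda_\ell$ (indeed $\overline E\,\overline{e_{\ell,k}}=\lambda_\ell\overline{e_{\ell,k}}$), and since the classes and their topologies do not depend on the generating operator (see \cite{DR16}) we may compute seminorms and coefficients with $\overline E$ in place of $E$. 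The single inequality driving everything is the Cauchy--Schwarz estimate
\begin{equation*}
	|u(\phi)|\le\sum_{\ell\in\mathbb N_0}\|\widehat u(\ell)\|_{\mathtt{HS}}\,\|\widehat\phi(\ell)\|_{\mathtt{HS}},
\end{equation*}
obtained by expanding $\phi$ in the system $\{\overline{e_{\ell,k}}\}$ and applying Cauchy--Schwarz in each finite-dimensional eigenspace, the coefficients of $\phi$ in this system obeying the same bounds as $\widehat\phi(\ell)$ by Theorem~\ref{T_charac_ultrad} applied to $\overline E$. Finally I record once that $\exp[\mathcal M(r)]=\sup_k r^{\nu k}/M_{\nu k}$ grows faster than any power of $r$, whence $\exp[-\mathcal M(L\lambda_\ell^{1/\nu})]$ decays faster than any power of $\lambda_\ell$; together with the polynomial growth of $\lambda_\ell$ underlying \eqref{smooth-charac} this gives $\sum_{\ell}\exp[-\mathcal M(L\lambda_\ell^{1/\nu})]<\infty$ for every $L>0$.

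For sufficiency ($\Leftarrow$) I assume the stated growth of $\|\widehat u(\ell)\|_{\mathtt{HS}}$ and show the pairing defines a continuous functional. Inserting this growth and the decay of $\phi$ from Theorem~\ref{T_charac_ultrad} into the Cauchy--Schwarz estimate produces a majorant controlled by $\sum_\ell\exp[\mathcal M(L\lambda_\ell^{1/\nu})-\mathcal M(L'\lambda_\ell^{1/\nu})]$. The device that makes this summable is Lemma~\ref{lemma_several_estimates}: writing the test-function decay as $\exp[-\mathcal M(L'\lambda_\ell^{1/\nu})]=\exp[-\tfrac12\mathcal M(L'\lambda_\ell^{1/\nu})]\cdot\exp[-\tfrac12\mathcal M(L'\lambda_\ell^{1/\nu})]$ and applying the lemma to each half yields two factors $\exp[-\mathcal M(\widetilde{L'}\lambda_\ell^{1/\nu})]$ with $\widetilde{L'}=L'/(\sqrt A H)$; one factor cancels the growth of $u$ and the other secures convergence. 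In the Roumieu case the $u$-estimate is at our disposal for \emph{every} scale, so I take that scale to be $\widetilde{L'}$, where $L'$ is the fixed scale furnished by $\phi$. In the Beurling case the quantifiers are reversed: now the $\phi$-estimate holds for every scale, and I choose it to be $\sqrt A H$ times the fixed scale of the $u$-estimate, so that after the same splitting the growth is again absorbed. In both cases the bound reduces to a convergent series $\sum_\ell\exp[-\mathcal M(c\,\lambda_\ell^{1/\nu})]$, for some $c>0$, times a continuous seminorm of $\phi$, which yields simultaneously the convergence of the series defining $u(\phi)$ and the continuity of $u$.

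For necessity ($\Rightarrow$) I test $u$ against eigenspace packets built from the conjugate basis. Fix $\ell$ and set
\begin{equation*}
	\phi_\ell\doteq\sum_{k=1}^{d_\ell}c_k\,\overline{e_{\ell,k}},\qquad c_k\doteq\frac{\overline{\widehat u(\ell,k)}}{\|\widehat u(\ell)\|_{\mathtt{HS}}},
\end{equation*}
so that $\|\phi_\ell\|_{L^2}=1$ and $u(\phi_\ell)=\|\widehat u(\ell)\|_{\mathtt{HS}}$. Since $\overline E^{\,m}\phi_\ell=\lambda_\ell^m\phi_\ell$, the $\overline E$-seminorm is computed exactly:
\begin{equation*}
	p_h(\phi_\ell)\doteq\sup_{m\in\mathbb N_0}\frac{\|\overline E^{\,m}\phi_\ell\|_{L^2}}{h^{\nu m}M_{\nu m}}=\sup_{m\in\mathbb N_0}\frac{(\lambda_\ell^{1/\nu}/h)^{\nu m}}{M_{\nu m}}=\exp\!\left[\mathcal M\!\left(\lambda_\ell^{1/\nu}/h\right)\right],
\end{equation*}
the last equality holding for all large $\ell$, the finitely many remaining indices being absorbed into the constants below; here $p_h(\phi_\ell)<\infty$ because $k!\le CL^kM_k$ forces $\mathcal M(r)<\infty$ at every finite $r$, so each $\phi_\ell$ genuinely lies in the test space. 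Now I invoke continuity of $u$ on the limit topology: on the Roumieu (inductive-limit) space continuity provides, for each $h$, a constant $C_h$ with $|u(\phi)|\le C_h\,p_h(\phi)$, while on the Beurling (Fr\'echet) space it provides a single $h$ and a constant $C$ with $|u(\phi)|\le C\,p_h(\phi)$. Evaluating at $\phi_\ell$ and putting $L\doteq 1/h$ turns $\|\widehat u(\ell)\|_{\mathtt{HS}}=|u(\phi_\ell)|\le C_h\exp[\mathcal M(L\lambda_\ell^{1/\nu})]$ into precisely the asserted estimates: for every $L>0$ with $K_L\doteq C_{1/L}$ in the Roumieu case, and for one pair $(K,L)$ in the Beurling case.

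The step I expect to be most delicate is the bookkeeping of the existential and universal scales and their interaction with Lemma~\ref{lemma_several_estimates}, which is exactly what converts a rescaling $L\mapsto L/(\sqrt A H)$ into the clean factor $\tfrac12$ in the exponent; matching the two directions forces the free scale onto the correct side --- small for Roumieu, large for Beurling. A second point requiring care, already used above, is the passage to the conjugate operator $\overline E$ to reconcile the convention $\widehat u(\ell,k)=u(\overline{e_{\ell,k}})$ with the eigenfunction expansion; this is legitimate only because the classes and their topologies are independent of the generating elliptic operator. Finally, the necessity direction rests on the precise description of continuous functionals on the inductive-limit (Roumieu) and projective-limit (Beurling) test spaces, which is where the identification of $\mathscr E_{\mathscr M}'(X)$ and $\mathscr E_{(\mathscr M)}'(X)$ as their topological duals enters.
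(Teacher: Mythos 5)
The paper itself offers no proof of Theorem~\ref{T_charac_ultradistribution}: it is stated as a known characterization imported from \cite{DR16} (alongside Theorem~\ref{T_charac_ultrad}), so there is no in-paper argument to compare yours against. Judged on its own, your proof is correct and follows the natural duality route that such characterizations are proved by: sufficiency via the Cauchy--Schwarz pairing estimate plus the splitting trick of Lemma~\ref{lemma_several_estimates} (used exactly as the paper uses it in Theorem~\ref{theorem-GH-R}), and necessity by testing $u$ on normalized single-eigenspace packets, for which the seminorms can be computed exactly; your quantifier bookkeeping in the Roumieu (inductive limit, estimate for every scale) versus Beurling (Fr\'echet, one scale) cases is right, and the passage to the conjugate operator $\overline{E}$ to reconcile the convention $\widehat{u}(\ell,k)=u(\overline{e_{\ell,k}})$ with an eigenfunction expansion is a genuine subtlety that you handle correctly. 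Two points are glossed and deserve a line each if this were to be written out in full: (i) your continuity claim in the sufficiency direction needs the \emph{quantitative} form of Theorem~\ref{T_charac_ultrad}, namely $\|\widehat{\phi}(\ell)\|_{\mathtt{HS}}\leq p_h(\phi)\exp[-\mathcal{M}(\lambda_\ell^{1/\nu}/h)]$ with the constant being precisely the seminorm $p_h(\phi)=\sup_m \|E^m\phi\|_{L^2}/(h^{\nu m}M_{\nu m})$; this follows in one line from $\lambda_\ell^{m}\|\widehat{\phi}(\ell)\|_{\mathtt{HS}}=\|\widehat{E^m\phi}(\ell)\|_{\mathtt{HS}}\leq\|E^m\phi\|_{L^2}$, but the stated theorem only gives an unquantified constant; (ii) the summability $\sum_\ell \exp[-\mathcal{M}(c\lambda_\ell^{1/\nu})]<\infty$ requires a Weyl-type lower bound $\lambda_\ell\gtrsim \ell^{\nu/n}$ (or at least $\sum_\ell\lambda_\ell^{-k}<\infty$ for some $k$), which you invoke only implicitly via the remark on polynomial growth. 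Neither gap is structural; both are standard facts consistent with the paper's framework.
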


\subsection{Invariant operators on ultradifferentiable classes \label{sec2.2}}

We now introduce the classes of operators under investigations. 
We emphasize that at this point we are taking inspiration from Proposition 2.1 and Definition 2.2 in \cite{KM20}.

\begin{defn}\label{def-invariant}
	Let $P: C^\infty(X)\to C^\infty(X)$ be a linear operator with continuous extension to $\mathscr{D}'(X)$ and assume that  the domain of $P^\ast$, the adjoint of $P$, contains $C^\infty(X)$.  Given   $E\in \Psi^\nu(X)$, we say that $P$ is:
	
	\begin{enumerate}
		\item [1.] 	$C^\infty$-invariant with respect to $E$ if 
		$$
		P(\mathscr{H}_\ell)\subseteq\mathscr{H}_ \ell, \ \forall \ell \in \mathbb{N}_0.
		$$
		
		\item [2.]	$\mathscr{M}$-invariant, with respect to $E$, if:
		\begin{enumerate}
			\item [a)] $P$ is  $C^\infty$-invariant;
			
			\item [b)] $P(\mathscr{E}_{\mathscr{M}}(X)) \subseteq \mathscr{E}_{\mathscr{M}}(X)$;
			
			\item [c)] $P: \mathscr{E}_{\mathscr{M}}(X)\to \mathscr{E}_{\mathscr{M}}(X)$  has continous extension to $\mathscr{E}_{\mathscr{M}}'(X)$.
		\end{enumerate}

		\item [3.]	$(\mathscr{M})$-invariant, with respect to $E$, if:
		\begin{enumerate}
			\item [a)] $P$ is  $C^\infty$-invariant;
			
			\item [b)] $P(\mathscr{E}_{(\mathscr{M})}(X)) \subseteq \mathscr{E}_{(\mathscr{M})}(X)$;
			
			\item [c)] $P: \mathscr{E}_{(\mathscr{M})}(X)\to \mathscr{E}_{(\mathscr{M})}(X)$  has continous extension to $\mathscr{E}_{(\mathscr{M})}'(X)$.
		\end{enumerate}
	\end{enumerate}
	
\end{defn}

Note that if  $P$ is  $\mathscr{M}$-invariant  (($\mathscr{M})$-invariant), then there exists a sequence of matrices 
$\sigma_P(\ell)\in \mathbb{C}^{d_\ell\times d_\ell}$,  $\ell\in\mathbb{N}_0$, such that
\begin{equation}\label{Fourier_dist}
	\widehat{P u}(\ell)=\sigma_P(\ell)\widehat{u}(\ell),\ \forall u\in  \mathscr{E}_{\mathscr{M}}'(X) \ (\forall u\in  \mathscr{E}_{(\mathscr{M})}'(X)),
\end{equation}
implying
\begin{equation}\label{action_P_on_u}
	Pu=\sum_{\ell=0}^\infty\sum_{m=1}^{d_\ell}\widehat{Pu}(\ell,m)
	e_{\ell,m} = \sum_{\ell=0}^\infty\sum_{m=1}^{d_\ell}(\sigma_P(\ell)\widehat{u}(\ell))_me_{\ell,m}.	
\end{equation}
We say that $\{\sigma_P(\ell)\}_{\ell \in \mathbb{N}_0}$ is the \textit{matrix-symbol} of $P$. 

Finally, in  order to connect the global hypoellipticity of an invariant operator $P$ to the behavior of its matrix-symbol, we introduce the following number:
\begin{align*}
	m(\sigma_P(\ell)) & \doteq \inf \left\{ \|\sigma_P(\ell)v\|_\mathtt{HS}; \ v\in \mathbb{C}^{d_\ell} \ \textrm{ and } \  \|v\|_\mathtt{HS}=1 \right\}, \ \ell \in \mathbb{N}_0.
\end{align*}

\section{Global $\mathscr{M}$-hypoellipticity \label{sec3}}

In this section, we analyze the  global hypoellipticity problem in the Roumieu case. 

\begin{defn}
	We say that a linear operator $P:\mathscr{E}_{\mathscr{M}}'(X) \to \mathscr{E}_{\mathscr{M}}'(X)$ is globally $\mathscr{M}$-hypoelliptic if conditions
	$$
	u \in \mathscr{E}_{\mathscr{M}}'(X)   \ \textrm{ and } \
	Pu \in \mathscr{E}_{\mathscr{M}}(X) 
	$$
	imply $u \in \mathscr{E}_{\mathscr{M}}(X)$. 
\end{defn}

\begin{thm}\label{theorem-GH-R}
	An $\mathscr{M}$-invariant operator $P$ is globally $\mathscr{M}$-hypoelliptic if and only if for every $\epsilon >0$ there exists $C_\epsilon>0$ such that
	\begin{equation} \label{dio_cond_Roumieu}
		m(\sigma_P(\ell))\geq \exp\left(-\mathcal{M}(\epsilon\lambda_\ell^{1/\nu})\right), \ \forall \ell \geq C_\epsilon.	
	\end{equation}
\end{thm}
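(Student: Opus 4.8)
The plan is to prove both implications by passing to the Fourier side, where Theorems~\ref{T_charac_ultrad} and~\ref{T_charac_ultradistribution} reduce membership in $\mathscr{E}_{\mathscr{M}}(X)$ and $\mathscr{E}_{\mathscr{M}}'(X)$ to decay/growth estimates on $\|\widehat{\cdot}(\ell)\|_{\mathtt{HS}}$, and where the action of $P$ is block-diagonalized through \eqref{Fourier_dist}. The only facts about $m(\sigma_P(\ell))$ I will use are the elementary inequality $\|\sigma_P(\ell)v\|_{\mathtt{HS}}\geq m(\sigma_P(\ell))\|v\|_{\mathtt{HS}}$, valid for all $v\in\mathbb{C}^{d_\ell}$, and the fact that, $\mathbb{C}^{d_\ell}$ being finite dimensional, the infimum defining $m(\sigma_P(\ell))$ is attained at a unit vector.

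For sufficiency, I assume \eqref{dio_cond_Roumieu} and take $u\in\mathscr{E}_{\mathscr{M}}'(X)$ with $f:=Pu\in\mathscr{E}_{\mathscr{M}}(X)$. Writing $\widehat{f}(\ell)=\sigma_P(\ell)\widehat{u}(\ell)$ and using the displayed inequality gives $m(\sigma_P(\ell))\|\widehat{u}(\ell)\|_{\mathtt{HS}}\leq\|\widehat{f}(\ell)\|_{\mathtt{HS}}$. By Theorem~\ref{T_charac_ultrad} there are $C,L>0$ with $\|\widehat{f}(\ell)\|_{\mathtt{HS}}\leq C\exp(-\mathcal{M}(L\lambda_\ell^{1/\nu}))$. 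Choosing $\epsilon:=\widetilde{L}=L/(\sqrt{A}H)$ in \eqref{dio_cond_Roumieu} and combining the two bounds, I obtain for $\ell\geq C_\epsilon$
\[
\|\widehat{u}(\ell)\|_{\mathtt{HS}}\leq C\exp\left(\mathcal{M}(\widetilde{L}\lambda_\ell^{1/\nu})-\mathcal{M}(L\lambda_\ell^{1/\nu})\right).
\]
Here Lemma~\ref{lemma_several_estimates} does exactly the needed work: since $\mathcal{M}(\widetilde{L}\lambda_\ell^{1/\nu})\leq\tfrac{1}{2}\mathcal{M}(L\lambda_\ell^{1/\nu})$, the exponent is at most $-\tfrac{1}{2}\mathcal{M}(L\lambda_\ell^{1/\nu})\leq-\mathcal{M}(\widetilde{L}\lambda_\ell^{1/\nu})$, which is precisely the $\mathscr{E}_{\mathscr{M}}(X)$-decay with rate $\widetilde{L}$. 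The finitely many indices $\ell<C_\epsilon$ carry fixed finite vectors $\widehat{u}(\ell)$ and are absorbed into the constant, so Theorem~\ref{T_charac_ultrad} yields $u\in\mathscr{E}_{\mathscr{M}}(X)$.

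For necessity I argue by contraposition. If \eqref{dio_cond_Roumieu} fails, there are $\epsilon_0>0$ and a sequence $\ell_j\to\infty$ with $m(\sigma_P(\ell_j))<\exp(-\mathcal{M}(\epsilon_0\lambda_{\ell_j}^{1/\nu}))$. Picking unit vectors $v_j\in\mathbb{C}^{d_{\ell_j}}$ realizing $\|\sigma_P(\ell_j)v_j\|_{\mathtt{HS}}=m(\sigma_P(\ell_j))$, I define $u$ by $\widehat{u}(\ell_j):=v_j$ and $\widehat{u}(\ell):=0$ otherwise. Since $\|\widehat{u}(\ell)\|_{\mathtt{HS}}\leq1$ and $\mathcal{M}\geq0$, Theorem~\ref{T_charac_ultradistribution} gives $u\in\mathscr{E}_{\mathscr{M}}'(X)$; but $\|\widehat{u}(\ell_j)\|_{\mathtt{HS}}=1$ for all $j$, while any $\mathscr{E}_{\mathscr{M}}(X)$-bound would force $\|\widehat{u}(\ell_j)\|_{\mathtt{HS}}\to0$ (as $\mathcal{M}(L\lambda_\ell^{1/\nu})\to\infty$), so $u\notin\mathscr{E}_{\mathscr{M}}(X)$. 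On the other hand $\widehat{Pu}(\ell_j)=\sigma_P(\ell_j)v_j$ has norm $m(\sigma_P(\ell_j))<\exp(-\mathcal{M}(\epsilon_0\lambda_{\ell_j}^{1/\nu}))$ and vanishes for the remaining $\ell$, so Theorem~\ref{T_charac_ultrad} (with $C=1$, $L=\epsilon_0$) gives $Pu\in\mathscr{E}_{\mathscr{M}}(X)$. This contradicts global $\mathscr{M}$-hypoellipticity, closing the contrapositive.

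The main obstacle, and the only point where the fine structure of $\mathscr{M}$ is used, is the sufficiency estimate: the difference $\mathcal{M}(\epsilon\lambda_\ell^{1/\nu})-\mathcal{M}(L\lambda_\ell^{1/\nu})$ must be dominated by a negative multiple of an $\mathcal{M}$-term, which is exactly what the rescaling $\widetilde{L}=L/(\sqrt{A}H)$ of Lemma~\ref{lemma_several_estimates} supplies, and tuning $\epsilon$ to the a priori unknown decay rate $L$ of $f$ is the decisive bookkeeping. The necessity direction is comparatively soft, the only delicate points being the attainment of $m(\sigma_P(\ell_j))$ and the verification that the sparse, norm-one sequence defines a genuine ultradistribution lying outside $\mathscr{E}_{\mathscr{M}}(X)$ whose image under $P$ is nonetheless ultradifferentiable.
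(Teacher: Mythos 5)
Your proof is correct and follows essentially the same route as the paper's: in the sufficiency direction you tune $\epsilon=\widetilde{L}=L/(\sqrt{A}H)$ to the decay rate of $Pu$ and invoke Lemma~\ref{lemma_several_estimates} exactly as the paper does, and in the necessity direction you build the same sparse counterexample ultradistribution from (near-)minimizing unit vectors and conclude via Theorems~\ref{T_charac_ultrad} and~\ref{T_charac_ultradistribution}. The only deviations are cosmetic: you use the pointwise bound $m(\sigma_P(\ell))\|\widehat{u}(\ell)\|_{\mathtt{HS}}\leq\|\widehat{Pu}(\ell)\|_{\mathtt{HS}}$ instead of the inverse-norm identity, and you rule out $u\in\mathscr{E}_{\mathscr{M}}(X)$ directly from the decay characterization rather than through the $C^\infty$ criterion \eqref{smooth-charac}.
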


\begin{proof}
	Let us start with the sufficiency. For this, assume condition \eqref{dio_cond_Roumieu}  and let
	$u \in \mathscr{E}_{\mathscr{M}}'(X)$ be a solution of $Pu = \phi \in \mathscr{E}_{\mathscr{M}}(X)$. By \eqref{Fourier_dist} and  \eqref{action_P_on_u} we have 
	$$
	\sigma_P(\ell)\widehat{u}(\ell) = \widehat{\phi}(\ell),  \ \forall \ell \in \mathbb{N}_0.
	$$
	
	Since $\phi \in \mathscr{E}_{\mathscr{M}}(X)$, there are 
	constants $C>0$ and $L_\phi>0$  such that
	$$
	\|\widehat{\phi}(\ell)\|_{\mathtt{HS}} \leq C\exp\left(-\mathcal{M}(L_\phi\lambda_\ell^{1/\nu})\right), \ \forall \ell \in \mathbb{N}_0,
	$$
	and setting  $\widetilde{L}_\phi \doteq \frac{L_\phi}{\sqrt{A}H}$ we obtain from Lemma \ref{lemma_several_estimates} that 
	\begin{equation*}
		\|\widehat{\phi}(\ell)\|_{\mathtt{HS}} \leq 
		C\exp\left(-\mathcal{M}(\widetilde{L}_\phi\lambda_\ell^{1/\nu})\right)\exp\left(-\mathcal{M}(\widetilde{L}_\phi\lambda_\ell^{1/\nu})\right),	 \ \forall \ell \in \mathbb{N}_0.
	\end{equation*}
	
	Let $\epsilon \doteq \widetilde{L}_\phi$  and consider $C_{\epsilon}>0$  satisfying 	\eqref{dio_cond_Roumieu}. Then,  $m(\sigma_P(\ell))\neq 0$ for all $\ell \geq C_\epsilon$, implying  
	$$
	\widehat{u}(\ell)=\sigma_P(\ell)^{-1}\widehat{\phi}(\ell), \ \forall \ell \geq C_\epsilon.
	$$
	Hence, by identity $\|\sigma_P(\ell)^{-1}\|_{\mathcal{L}(\mathbb{C}^{d_{\ell}})} = m(\sigma_P(\ell))^{-1}$, we obtain
	\begin{align*}
		\|\widehat{u}(\ell)\|_{\mathtt{HS}}  &
		\leq \exp\left[\mathcal{M}(\widetilde{L}_\phi\lambda_{\ell}^{1/\nu})\right]
		\|\widehat{\phi}(\ell)\|_{\mathtt{HS}}  \\
		& \leq C \exp\left[-\mathcal{M}(\widetilde{L}_\phi \lambda_{\ell}^{1/\nu})\right],	
	\end{align*}
	for all $\ell \geq C_{\epsilon}$. Then, $u \in \mathscr{E}_{\mathscr{M}}(X)$ and the sufficiency is proved.

	For the  necessary part we proceed by a contradiction argument: we  assume that \eqref{dio_cond_Roumieu} fails and exhibit 
	$u \in \mathscr{E}_{\mathscr{M}}'(X) \setminus \mathscr{E}_{\mathscr{M}}(X)$ so that 
	$Pu \in \mathscr{E}_{\mathscr{M}}(X)$.
	
	Under this assumption, there exists $\epsilon_0>0$ satisfying the following: for every $C>0$ there is $\ell>C$ such that
	$$
	m(\sigma_P(\ell))< \exp\left(-\mathcal{M}(\epsilon_0\lambda_\ell^{1/\nu})\right).
	$$
	
	In particular, for $C=1$ we obtain $j_1>1$ such that 
	$$
	m(\sigma_P(\ell_1))< \exp\left(-\mathcal{M}(\epsilon_0\lambda_{\ell_1}^{1/\nu})\right),
	$$
	and $v_{\ell_1}\in\mathbb{C}^{d_{\ell_1}}$  satisfying 
	$\|v_{\ell_1}\|_{\mathtt{HS}}=1$ and 
	$$
	\|\sigma_P(\ell_1)v_{\ell_1}\|_{\mathtt{HS}}<\exp\left(-\mathcal{M}(\epsilon_0\lambda_{\ell_1}^{1/\nu})\right).
	$$
	Hence, inductively, we obtain $\{v_{\ell_k}\}_{k\in\mathbb{N}}$ 
	such that  $\|v_{\ell_k}\|_\mathtt{HS}=1$  and 
	
	\begin{equation}\label{3.4 do art wag}
		\|\sigma_P(\ell_k)v_{\ell_k}\|_\mathtt{HS}
		<\exp\left(-\mathcal{M}(\epsilon_0\lambda_{\ell_k}^{1/\nu})\right), \ \forall k \in \mathbb{N}.
	\end{equation}

	Now, we set 
	$$
	\widehat{u}(\ell)\doteq\left\{
	\begin{array}{l}
		v_{\ell_k}, \ \text{ if } \ \ell=\ell_k, \\ 
		0, \  \ \text{ if } \  \ell\neq \ell_k,\ 
	\end{array}
	\right.
	$$
	and 
	$u\doteq\sum_{\ell=0}^\infty \sum_{m=1}^{d_\ell}\widehat{u}(\ell,m)e_{\ell,m}.$
	
	Let us to show that $u \in \mathscr{E}_{\mathscr{M}}'(X) \setminus \mathscr{E}_{\mathscr{M}}(X)$. Given $L>0$, from  the fact that $\mathcal{M}$ is non-decreasing  and 
	$0=\lambda_0<\lambda_k<\lambda_{k+1}$,  $k\in\mathbb{N}$, we get  
	$$
	\|\widehat{u}(\ell)\|_\mathtt{HS} \leq 1 < 	\exp\left(\mathcal{M}(L\lambda_\ell^{1/\nu})\right), \ \forall \ell\in\mathbb{N}_0.
	$$
	Thus, $u\in\mathscr{E}_\mathscr{M}^\prime(X)$ in view of Theorem \ref{T_charac_ultradistribution}. Moreover, since $\|\widehat{u}(j_k)\|_\mathtt{HS}=1$, for all $k \in \mathbb{N}$, it follows from \eqref{smooth-charac} that $u \notin C^\infty(X)$ and consequently $u \notin \mathscr{E}_{\mathscr{M}}(X)$.
	
	Finally, note that 
	$$
	Pu=\sum_{k=1}^\infty\sum_{r=1}^{d_{\ell_k}}
	\left(\sigma_P(\ell_k)\widehat{u}(\ell_k)\right)_re_{\ell_k,r},
	$$
	hence, by \eqref{3.4 do art wag}, 
	$$
	\|\widehat{Pu}(j_k)\|_\mathtt{HS}=\|\sigma_P(\ell_k)v_{\ell_k}\|_\mathtt{HS} < \exp\left(-\mathcal{M}(\epsilon_0\lambda_{\ell_k}^{1/\nu})\right), \ \forall k\in\mathbb{N}.		
	$$
	
	Choosing $C=1$ and  $L=\epsilon_0$, it follows from Theorem \ref{T_charac_ultrad} that $Pu$ belongs to  $\mathscr{E}_{\mathscr{M}}(X)$, implying that $P$ is not globally $\mathscr{M}$-hypoelliptic.
	
\end{proof}

\begin{rem}
	Note that if we pick $\mathscr{M} = \{(k!)^s\}_{k \in \mathbb{N}}$, $1 < s < \infty$, we may see $\mathscr{E}_{\mathscr{M}}(X)$
	as the Gevrey class $\mathscr{G}^s(X)$ on $X$, since  $\mathcal{M}(r)\sim r^{1/s}$, as $r \to \infty$. Therefore,  a $\mathscr{G}^s$-invariant operator $P$ is globally $\mathscr{G}^s$-hypoelliptic if and only if for every $\epsilon >0$ there exists $C_\epsilon>0$ such that
	\begin{equation*}
		m(\sigma_P(\ell))\geq\exp\left(- \epsilon\lambda_\ell^{1/s\nu}\right), \ \forall \ell \geq C_\epsilon.	
	\end{equation*}
\end{rem}

\begin{rem}
	We recall that A. Kirilov and W. A. A. de Moraes stdudied 
	the global hypoellipticity in the $C^{\infty}$ sense, namely, the problem
	$$
	u \in \mathcal{D}'(X)   \ \textrm{ and } \
	Pu \in C^{\infty}(X) \ \Longrightarrow \ u \in C^{\infty}(X).
	$$
	They proved that a $C^\infty$-invariant operator $P$ is  globally $C^{\infty}$-hypoelliptic if and only if there are postive constants $L, M$ and $R$ such that
	$$
	m(\sigma_P(\ell)) \geq L (1+ \lambda_{\ell})^{M/\nu}, \ \forall \ell \geq R.
	$$

	Notice that given $\epsilon>0$ there is  $C_{\epsilon}>0$ such that
	$$
	(1+ \lambda_{\ell})^{M/\nu} \geq
	\exp\left(-\mathcal{M}(\epsilon \lambda_{\ell}^{1/\nu})\right), \forall \ell \geq C_{\epsilon},
	$$
	since $\lambda_{\ell} \to \infty$ and $\mathcal{M}$ is a positive non-decreasing function. By this discussion we have the following: \textit{if $P$ is  globally $C^{\infty}$-hypoelliptic and  $\mathscr{M}$-invariant, then it is also globally $\mathscr{M}$-hypoelliptic.}
\end{rem}

\section{Global ($\mathscr{M}$)-hypoellipticity \label{sec4}}

We now consider  the global hypoellipticity problem in the Beurling case. 

\begin{defn}
	We say that a linear operator $P:\mathscr{E}_{(\mathscr{M})}'(X) \to \mathscr{E}_{(\mathscr{M})}'(X)$ is globally $(\mathscr{M})$-hypoelliptic if conditions
	$$
	u \in \mathscr{E}_{(\mathscr{M})}'(X)   \ \textrm{ and } \
	Pu \in \mathscr{E}_{(\mathscr{M})}(X)
	$$
	imply $u \in \mathscr{E}_{(\mathscr{M})}(X)$. 
\end{defn}

\begin{thm}\label{theorem-GH-B}
	An  $(\mathscr{M})$-invariant operator $P$ is globally $(\mathscr{M})$-hypoelliptic if and only if there are positive constants $K$, $r$ and $C$ such that
	\begin{equation} \label{dio_cond_Beurling}
		m(\sigma_P(\ell))\geq K\exp\left(-\mathcal{M}(r\lambda_\ell^{1/\nu})\right), \ \textrm{ for all } \  \ell\geq C.
	\end{equation}
\end{thm}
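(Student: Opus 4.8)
The plan is to mirror the proof of Theorem \ref{theorem-GH-R}, adapting each direction to the Beurling setting. The essential new feature is that the Beurling quantifiers let the decay rate of the Fourier coefficients of a test function be chosen freely (Theorem \ref{T_charac_ultrad} gives a constant $C_L$ for \emph{every} $L>0$), and this flexibility is exactly what is exploited in both directions. As before, the two implications split into a direct estimate using \eqref{Fourier_dist}--\eqref{action_P_on_u} and a contraposition in which a counterexample is built from the failure of \eqref{dio_cond_Beurling}.

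For the sufficiency, I would assume \eqref{dio_cond_Beurling} and take $u\in\mathscr{E}_{(\mathscr{M})}'(X)$ with $Pu=\phi\in\mathscr{E}_{(\mathscr{M})}(X)$, so that $\sigma_P(\ell)\widehat{u}(\ell)=\widehat{\phi}(\ell)$ for all $\ell$. From \eqref{dio_cond_Beurling} the matrix $\sigma_P(\ell)$ is invertible for $\ell\geq C$, and using $\|\sigma_P(\ell)^{-1}\|=m(\sigma_P(\ell))^{-1}$ I get
\[
\|\widehat{u}(\ell)\|_{\mathtt{HS}}\leq m(\sigma_P(\ell))^{-1}\|\widehat{\phi}(\ell)\|_{\mathtt{HS}}\leq K^{-1}\exp\!\left(\mathcal{M}(r\lambda_\ell^{1/\nu})\right)\|\widehat{\phi}(\ell)\|_{\mathtt{HS}}.
\]
Given an arbitrary target rate $L>0$, I must produce decay $\exp(-\mathcal{M}(L\lambda_\ell^{1/\nu}))$ for $u$, so the point is to apply Theorem \ref{T_charac_ultrad} to $\phi$ with a rate $L'$ large enough to absorb the gain $\exp(\mathcal{M}(r\lambda_\ell^{1/\nu}))$. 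Applying Lemma \ref{lemma_several_estimates} with its parameter equal to $L'\doteq\sqrt{A}H\max\{r,L\}$ yields the doubling estimate $\mathcal{M}(L'\lambda_\ell^{1/\nu})\geq 2\mathcal{M}(\max\{r,L\}\lambda_\ell^{1/\nu})$, whence by monotonicity of $\mathcal{M}$ one has $\mathcal{M}(L'\lambda_\ell^{1/\nu})\geq\mathcal{M}(r\lambda_\ell^{1/\nu})+\mathcal{M}(L\lambda_\ell^{1/\nu})$. With the constant $C_{L'}$ from the Beurling estimate for $\phi$, this gives $\|\widehat{u}(\ell)\|_{\mathtt{HS}}\leq K^{-1}C_{L'}\exp(-\mathcal{M}(L\lambda_\ell^{1/\nu}))$ for $\ell\geq C$, and the finitely many remaining indices are absorbed into the constant; hence $u\in\mathscr{E}_{(\mathscr{M})}(X)$ by Theorem \ref{T_charac_ultrad}.

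For the necessity I would argue by contraposition. Negating \eqref{dio_cond_Beurling} means that for all $K,r,C>0$ there is $\ell\geq C$ with $m(\sigma_P(\ell))<K\exp(-\mathcal{M}(r\lambda_\ell^{1/\nu}))$. I would construct a strictly increasing sequence $\ell_k$ inductively, applying this negation with $K=1$, $r=k$, and $C=\ell_{k-1}+1$, to obtain $\ell_k>\ell_{k-1}$ with $m(\sigma_P(\ell_k))<\exp(-\mathcal{M}(k\lambda_{\ell_k}^{1/\nu}))$. Since $m(\sigma_P(\ell_k))$ is an infimum over a compact unit sphere it is attained, so I may fix $v_{\ell_k}$ with $\|v_{\ell_k}\|_{\mathtt{HS}}=1$ and $\|\sigma_P(\ell_k)v_{\ell_k}\|_{\mathtt{HS}}<\exp(-\mathcal{M}(k\lambda_{\ell_k}^{1/\nu}))$. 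Setting $\widehat{u}(\ell)=v_{\ell_k}$ when $\ell=\ell_k$ and $\widehat{u}(\ell)=0$ otherwise, and letting $u$ be the associated expansion, the bound $\|\widehat{u}(\ell)\|_{\mathtt{HS}}\leq 1\leq\exp(\mathcal{M}(L\lambda_\ell^{1/\nu}))$ (valid for any single $L>0$ because $\mathcal{M}\geq0$) places $u\in\mathscr{E}_{(\mathscr{M})}'(X)$ by Theorem \ref{T_charac_ultradistribution}, while $\|\widehat{u}(\ell_k)\|_{\mathtt{HS}}=1$ together with \eqref{smooth-charac} forces $u\notin C^\infty(X)$, and hence $u\notin\mathscr{E}_{(\mathscr{M})}(X)$.

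It remains to check $Pu\in\mathscr{E}_{(\mathscr{M})}(X)$, where $\widehat{Pu}(\ell_k)=\sigma_P(\ell_k)v_{\ell_k}$ and $\widehat{Pu}(\ell)=0$ otherwise. Here the choice $r=k\to\infty$ is decisive: for any fixed $L>0$, monotonicity of $\mathcal{M}$ gives $\exp(-\mathcal{M}(k\lambda_{\ell_k}^{1/\nu}))\leq\exp(-\mathcal{M}(L\lambda_{\ell_k}^{1/\nu}))$ once $k\geq L$, so after absorbing the finitely many indices $k<L$ into a constant $C_L$ one obtains $\|\widehat{Pu}(\ell)\|_{\mathtt{HS}}\leq C_L\exp(-\mathcal{M}(L\lambda_\ell^{1/\nu}))$ for every $\ell$; thus $Pu\in\mathscr{E}_{(\mathscr{M})}(X)$ by Theorem \ref{T_charac_ultrad}, and $P$ is not globally $(\mathscr{M})$-hypoelliptic. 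The main obstacle, and the only genuine departure from the Roumieu argument, is the sufficiency step: one must convert the \emph{fixed} gain $\exp(\mathcal{M}(r\lambda_\ell^{1/\nu}))$ arising from $m(\sigma_P(\ell))^{-1}$ into \emph{arbitrarily fast} decay for $u$, which is precisely what the doubling property encoded in Lemma \ref{lemma_several_estimates} delivers through the choice $L'=\sqrt{A}H\max\{r,L\}$.
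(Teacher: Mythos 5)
Your proof is correct and follows essentially the same route as the paper's: the same symbol inversion plus Lemma \ref{lemma_several_estimates} for sufficiency, and the same inductive construction with $K=1$, $r=k$ for necessity. The only difference is cosmetic: where the paper splits the sufficiency into the cases $r\leq L$ and $r>L$, you unify them with the single choice $L'=\sqrt{A}H\max\{r,L\}$, which amounts to exactly the same two applications of the doubling lemma.
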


\begin{proof}
	Consider $u \in \mathscr{E}_{(\mathscr{M})}'(X)$ such that $Pu = \phi \in \mathscr{E}_{\mathscr{M}}'(X)$ and assume condition \eqref{dio_cond_Beurling}. In this case,  we have 
	$\widehat{u}(\ell)=\sigma_P(\ell)^{-1}\widehat{\phi}(\ell)$,
	and 
	\begin{equation*}
		\|\widehat{u}(\ell)\|_\mathtt{HS} \leq
		\frac{1}{K}
		\exp\left(\mathcal{M}(r\lambda_{\ell}^{1/\nu})\right)
		\|\widehat{\phi}(\ell)\|_\mathtt{HS}, \  \forall \ell \geq C.
	\end{equation*}
	Once $\phi \in \mathscr{E}_{(\mathscr{M})}(X)$ we obtain for every $L'>0$ a constant  $C_{L'}>0$ such that
	\begin{equation}\label{ult res 2}	\|\widehat{u}(\ell)\|_\mathtt{HS}\leq \frac{C_{L^\prime}}{K}\exp\left(\mathcal{M}(r\lambda_{\ell}^{1/\nu})
		\right) \exp\left(-\mathcal{M}(L'\lambda_{\ell}^{1/\nu})\right).
	\end{equation}
	
	Consider $L>0$ be fixed.	If $r\leq L$,  we have
	$$
	\exp\left(\mathcal{M}(r\lambda_{\ell}^{1/\nu})\right) \leq \exp\left(\mathcal{M}(L\lambda_{\ell}^{1/\nu})\right), \ \forall \ell \in \mathbb{N},$$ 
	since $\mathcal{M}$ is non-decreasing. Therefore, by setting $L^\prime\doteq L\sqrt{A}H$, we obtain from Lemma \ref{lemma_several_estimates}	that		
	$$
	\exp\left(-\mathcal{M}(L\sqrt{A}H\lambda_{\ell}^{1/\nu})\right)
	\leq \exp\left(-2\mathcal{M}(L\lambda_{\ell}^{1/\nu})\right), \ \forall \ell \in \mathbb{N},
	$$	
	and by \eqref{ult res 2} we get	
	$$
	\|\widehat{u}(\ell)\|_\mathtt{HS}  \leq 
	\frac{C_L}{K}\exp\left(-\mathcal{M}(L\lambda_\ell^{1/\nu})\right), \ \forall \ell \geq C.
	$$

	On the other hand, if $r > L$, a new application of Lemma \ref{lemma_several_estimates}, with  $L^\prime\doteq r\sqrt{A}H$, gives 
	$$
	\exp\left(-\mathcal{M}(r\sqrt{A}H\lambda_{\ell}^{1/\nu})\right)\leq
	\exp\left(-2\mathcal{M}(r\lambda_{\ell}^{1/\nu})\right), \ \forall \ell \in \mathbb{N}.
	$$

	Hence, for any $L>0$ there exists $C_L>0$ such that
	$$
	\|\widehat{u}(\ell)\|_\mathtt{HS}\leq C_L\exp\left(-\mathcal{M}(L\lambda_{\ell}^{1/\nu})\right), \ \forall \ell \geq C,
	$$
	implying $\phi \in \mathscr{E}_{(\mathscr{M})}(X)$. Therefore, we have proved the sufficiency of \eqref{dio_cond_Beurling}.
	
	\bigskip 
	
	Conversely, let us assume that \eqref{dio_cond_Beurling} fails. In this case, for every positive constants $K$, $r$ and $C$ there is $\ell>C$ such that
	$$
	m(\sigma_P(\ell))<K \exp\left(-\mathcal{M}(r\lambda_{\ell}^{1/nu})\right).
	$$

	Then, by an inductive argument, we may construct  a sequence
	$\{v_{\ell_k}\}_{k \in \mathbb{N}}$ such that 
	$v_{\ell_k}\in \mathbb{C}^{d_{\ell_k}}$, $\|v_{\ell_k}\|_\mathtt{HS}=1$ and
	\begin{equation*}	
		\|\sigma_P(\ell_k)v_{\ell_k}\|_\mathtt{HS}< \exp\left(-\mathcal{M}(k\lambda_{\ell_k}^{1/\nu})\right),\ \forall  k\in \mathbb{N}.
	\end{equation*}
	
	Now, consider
	$$
	\widehat{u}(\ell) \doteq \left \{
	\begin{array}{l}
		v_{\ell_k} \ \textrm{ if } \ \ell=\ell_k, \\ 
		0, \ \textrm{ if } \ \ell \neq \ell_k,
	\end{array}
	\right.
	$$
	and 
	$u \doteq \sum_{\ell=0}^\infty \sum_{m=1}^{d_\ell}\widehat{u}(\ell,m)e_{\ell,m}.$

	We claim that $u\in\mathscr{E}_{(\mathscr{M})}^\prime(X)\setminus\mathscr{E}_{(\mathscr{M})}(X)$. Indeed, 
	\begin{align*}
		\|\widehat{u}(\ell)\|_\mathtt{HS} 
		& \leq \exp\left(-\mathcal{M}(\lambda_\ell^{1/\nu})\right)\exp\left(\mathcal{M}(\lambda_\ell^{1/\nu})\right) \\
		& \leq 
		\exp\left(-\mathcal{M}(\lambda_0^{1/\nu})\right)\exp\left(\mathcal{M}(\lambda_\ell^{1/\nu})\right) \\
		& = \exp\left(\mathcal{M}(\lambda_\ell^{1/\nu})\right), 
	\end{align*}
	for all $\ell \in \mathbb{N}$, since $\lambda_0=0$. Once $\|\widehat{u}(\ell_k)\|_\mathtt{HS}=1$,  $\forall k \in \mathbb{N}$, it follows that  
	$u \in \mathscr{E}_{(\mathscr{M})}^\prime(X)\setminus  \mathscr{E}_{(\mathscr{M})}(X)$.

	Finally, we proof that $Pu \in \mathscr{E}_{(\mathscr{M})}(X)$. Since
	\begin{equation*}
		Pu=
		\sum_{k=1}^\infty\sum_{m=1}^{d_{\ell_k}}
		(\sigma_P(\ell_k)\widehat{u}(\ell_k))_m e_{\ell_k,m},	
	\end{equation*}
	we have
	$$
	\|\widehat{Pu}(\ell_k)\|_\mathtt{HS}=
	\|\sigma_P(\ell_k)v_{\ell_k}\|_\mathtt{HS}
	<\exp\left(-\mathcal{M}(k\lambda_{\ell_k}^{1/\nu})\right), \ \forall k\in \mathbb{N}.
	$$
	
	Consider $L>0$ be fixed. For all  $k \geq L$,
	$$
	\|\widehat{Pu}(\ell_k)\|_\mathtt{HS}< \exp\left(-\mathcal{M}(k\lambda_{\ell_k}^{1/\nu})\right)\leq
	\exp\left(-\mathcal{M}(L\lambda_{\ell_k}^{1/\nu})\right),
	$$	
	while if $k<L$ we may obtain $C_{L,k}>0$  such that
	\begin{equation}\label{max}
		\exp\left(-\mathcal{M}(k\lambda_{\ell_k}^{1/\nu})\right)
		\leq C_{L,k}\exp\left(-\mathcal{M}(L\lambda_{\ell_k}^{1/\nu})\right),
	\end{equation}
	implying
	$$
	\|\widehat{Pu}(\ell_k)\|_\mathtt{HS}\leq  C_L\exp\left(-\mathcal{M}(L\lambda_{\ell_k}^{1/\nu})\right),\ k<L,
	$$
	where $C_L$ is the max of all constants $C_{L,k}$ satisfying \eqref{max}, for $1\leq k<L$.

	Therefore, it follows from   Theorem  \ref{T_charac_ultrad}  that  $Pu \in \mathscr{E}_{(\mathscr{M})}(X)$. This proves that $P$ is not globally $(\mathscr{M})$-hypoelliptic and the necessity of \eqref{dio_cond_Beurling}.

\end{proof}

\bibliographystyle{plain}
\bibliography{references}

\end{document}